\theoremstyle{definition}
\newtheorem{thm}{Theorem}[section]
\newtheorem{lem}[thm]{Lemma}
\newtheorem{prop}[thm]{Proposition}
\theoremstyle{definition}
\def \varpi {\bar \omega}
\def \Z{\mathbb Z}
\def \R {\mathbb R}
\def \H {\mathbb H}
\def \C {\mathbb{C}}
\def \P {\mathbb {HP}}
\def \CP {\mathbb{CP}}
\def\today{\space\ifcase\month \or January \or February \or March \or April\or May \or June \or July \or August\or September \or October\or November \or December \fi\space\number\day,\space\number\year}
\begin{document}

\keywords{quaternions, holomorphic, hypermeromorphic functions, Hamilton 4-manifolds.}
\mathclass{Primary 30G35; Secondary 30D30.}

%
%

\abbrevauthors{P. Dolbeault}
\abbrevtitle{On a noncommutative algebraic geometry}

\title{On a noncommutative algebraic geometry}

\author{Pierre Dolbeault}
\address{Sorbonne Universit\'es,\\ UPMC Univ.~Paris 06,\\ Institut de Math\'ematiques de Jussieu - Paris Rive Gauche UMR7586,\\ 4, place Jussieu 75005 Paris, France\\ E-mail: pierre.dolbeault@upmc.fr}

\maketitlebcp

\begin{abstract}
Several sets of quaternionic functions are described and studied with respect to hyperholomorphy, addition and (non commutative) multiplication, on open sets of $\H$, then Hamilton 4-manifolds analogous to Riemann surfaces, for $\H$ instead of $\C$, are defined, and so begin to describe a class of four dimensional manifolds.
\end{abstract}

\thispagestyle{empty}
{\small\tableofcontents}

\section{Introduction}\label{Sec:Intro}

We first recall the definition of the field $\H$ of quaternions using pairs of complex numbers and a modified Cauchy-Fueter operator (section~\ref{Sec2})
that have been introduced by C. Colombo and al., \cite{CLSSS 07}. We will only use right multiplication. We will consider $C^\infty$ $\H$-valued quaternionic functions defined on an open set $U$ of $\H$ whose behavior mimics the behavior of holomorphic functions on an open set of $\C$. If such a function does not vanish identically, it has an (algebraic) inverse. Finally we describe properties of Hyperholomorphic functions with respect to addition and multiplication.

In section~\ref{Sec3}, we characterize the quaternionic functions which are, almost everywhere, hyperholomorphic and whose inverses are hyperlomorphic almost everywhere, on $U$, as the solutions of a system of two non linear PDE. We find non trivial examples of a solution showing that the considered space of functions is significant; we will call these functions Hypermeromorphic.

At the moment, I am unable to get the general solution of the the system of PDE. Same difficulty for subsequent occurring systems of PDE. 


In section~\ref{Sec4}, we describe a subspace of hyperholomorphic and hypermeromorphic functions defined almost everywhere on $U$, having ``good properties for addition and multiplication"; we again obtain systems of non linear PDE.

In section~\ref{Sec5} and the following, we consider globalization of the above notions, define Hamilton 4-manifolds analogous to Riemann surfaces, for $\H$ instead of $\C$, and give examples of such manifolds; our ultimate aim is to describe a class of 4-dimensional manifolds.

\section{Quaternions. \texorpdfstring{$\H$-}{H-}valued functions. Hyperholomorphic functions}\label{Sec2}
\hspace*{1pt}\\
See~\cite{CSSS 04,CLSSS 07,D13}.

\subsection{Quaternions}

If $q\in\H$, then $q=z_1+z_2{\bf j}$ where $z_1, z_2\in\C$. We have $z_1{\bf j}={\bf j}\overline z_1$, and note $\vert q\vert= \vert z_1\vert^2+\vert z_2\vert^2$.

The conjugate of $q$ is $\overline q=\overline z_1-z_2{\bf j}.$
Let us denote * the (right) multiplication in $\H$, then the right inverse of $q$ is: $q^{-1}=\vert q\vert^{-1}\overline q$ 
\subsection{Quaternionic functions}

Let $U$ be an open set of $\H\cong \C^2$ and $f\in C^\infty(U,\H)$, then $f=f_1+f_2{\bf j}$, where $f_1, f_2\in C^\infty(U,\C)$. The complex valued functions $f_1, f_2$ will be called the {\it components} of $f$.

\subsection{Definitions}

Let $U$ be an open neighborhood of $0$ in $\H\cong\C^2.$

{\it\noindent (a) From now on, we will consider the quaternionic functions $f=f_1+f_2{\bf j}$ having the following properties}:

(i) When $f_1$ and $f_2$ are not holomorphic, the set $Z(f_1)\cap
Z(f_2)$ is discrete on $U$;

(ii) for every $q\in Z(f_1)\cap Z(f_2)$, $J_q^\alpha$ (.) denoting the {\it jet of order $\alpha$ at $q$} (see \cite{M 66}), let $m_i=\sup_{\alpha_i} J_q^{\alpha_i}(f_i)=0$; $m_i, i=1,2,$ is finite.

$m_q=inf m_i$ is the {\it order of the zeroe $q$ of $f$}.

{\it\noindent (b) We will also consider the quaternionic functions defined almost everywhere on} $U$ (i.e. outside a locally finite set of $C^\infty$ hypersurfaces, namely $Z(f_1),
Z(f_2)$).

\subsection{Modified Cauchy-Fueter operator ${\mathcal D}$. Hyperholomorphic functions}
\hspace*{1pt}\\
See \cite{CLSSS 07,F 39}.

For $f\in C^\infty (U,\H)$, with $f=f_1+f_2{\bf j}$, 

${\mathcal D}f(q) =\displaystyle\frac{1}{2}\big(\frac{\partial}{\partial\overline z_1}+{\bf j}\frac{\partial}{\partial\overline z_2}\big)f(q)$.
\\
A function $f\in C^\infty (U,\H)$ is said {\it hyperholomorphic} if ${\mathcal D} f=0$.
\\
Characterization of the hyperholomorphic function $f$ on $U$:
\begin{equation}\label{Eq1}
{\frac{\partial
f_1}{\partial\overline z_1}-\frac{\partial\overline f_2}{\partial z_2}}=0; \ {\displaystyle\frac{\partial f_1}{\partial\overline
z_2}+\frac{\partial\overline f_2}{\partial z_1}}=0,\ {\rm on} \ U.
\end{equation}

\subsection{Several families of meromorphic functions}

The conditions $f_1$ is holomorphic and $f_2$ is holomorphic are equivalent on $U$; the same is true for almost everywhere defined holomorphic functions on $U$. 

By definition, {\it holomorphic (almost everywhere defined functions of two complex variables} on $U$ are such that $f_2=0$, and $f_1$ is (almost everywhere) holomorphic.

\subsubsection{Consider the almost everywhere defined hyperholomorphic functions on $U$ whose components are real}

$$f=f_1+f_2{\bf j}$$ 

According to a remark of Guy Roos in March 2013, they are almost everywhere holomorphic \cite{R 13}. 

\subsubsection{The above considered almost everywhere holomorphic functions are meromorphic and constitute two $\H$-commutative algebras $A_1, A_2$, with common origin $0$}\label{Subsec:2.5.2}

Let $f = a+b{\bf i}$, and $g = c+d{\bf j}$, with $a,b,c,d \in \R$ be two almost everywhere defined holomorphic functions i.e.meromorphic functions on $U$. 

$A_1$ is the set of the meromorphic functions $f= a+b{\bf i}$, and $A_2$ is the set of meromorphic functions $g=c+d{\bf j}$, with $a,b,c,d \in \R$

The sums $f+g=a+c+d{\bf j}+b{\bf i}$ constitute the algebra $A_1+A_2$ of meromorphic functions.

More generally, $A_{\alpha, \beta} = \alpha A_1+\beta A_2$, with $\alpha, \beta \in \R$ is an algebra of meromorphic functions on $U$.

$A_{\alpha\beta}=\displaystyle\sum_{a,b,c,d,\alpha,\beta \in\R}\alpha(a+b{\bf i})+\beta(c+d{\bf j})$

\subsubsection{We now begin to introduce multiplication for hyperholomorphic functions, addition and scalar muliplication being obvious}

\subsection{Multiplication of almost everywhere defined hyperholomorphic functions}

\begin{prop}{\it \label{Proposition 2.3.2} Let $f'$, $f''$ be two almost everywhere defined hyperholomorphic functions. Then, their product $f'*f''$ satisfies:
$$
{\mathcal D} (f'*f'')={\mathcal D}f'*{\bf j}f''+\big(f'(\frac{\partial }{\partial\overline z_1})+\overline f'{\bf j}\frac{\partial }{\partial\overline z_2}\big)f''
$$
}\end{prop}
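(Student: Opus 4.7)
My plan is to compute $\mathcal{D}(f'*f'')$ by brute-force expansion, reducing everything to complex Wirtinger derivatives of the components. First I would write the quaternionic product in the $(1,\mathbf{j})$-decomposition: using $\mathbf{j}z=\overline z\,\mathbf{j}$ and $\mathbf{j}^{2}=-1$ one obtains
$$f'*f''=(f'_{1}f''_{1}-f'_{2}\overline{f''_{2}})+(f'_{1}f''_{2}+f'_{2}\overline{f''_{1}})\,\mathbf{j}.$$
This reduces $\mathcal{D}(f'*f'')$ to a finite sum of complex partial derivatives of the four components $f'_{i},f''_{i}$, to which the ordinary Leibniz rule $\partial_{i}(uv)=(\partial_{i}u)v+u\,\partial_{i}v$ applies termwise.

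The second step is to split the resulting expression into two bundles according to which factor is being differentiated. The terms in which the derivative falls on $f'$ can be reassembled because right multiplication by a factor involving $f''$ distributes over the combination $\partial f'/\partial\overline z_{1}+\mathbf{j}\,\partial f'/\partial\overline z_{2}=2\,\mathcal{D}f'$; the conjugations produced by $\mathbf{j}z=\overline z\,\mathbf{j}$ are exactly those needed to identify this bundle with the quaternion product $\mathcal{D}f'*\mathbf{j}f''$ appearing on the right-hand side. The remaining bundle, in which the derivative falls on $f''$, is then repackaged as the action on $f''$ of the first-order operator $f'\,(\partial/\partial\overline z_{1})+\overline{f'}\,\mathbf{j}\,(\partial/\partial\overline z_{2})$; the appearance of $\overline{f'}$ rather than $f'$ in the second piece is forced by pushing the leading $\mathbf{j}$ past the complex components of $f'$.

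The main obstacle is purely combinatorial: because of the non-commutativity of $\H$, each application of $\mathbf{j}z=\overline z\,\mathbf{j}$ introduces a conjugation, so the handful of terms produced by the Leibniz rule must be regrouped in a non-obvious order before the two bundles become recognizable. There is no deeper analytic input; in particular the hyperholomorphy equations \eqref{Eq1} are not used. The identity is a purely algebraic property of $\mathcal{D}$ at the level of first-order differential operators with quaternionic coefficients, and as such is valid for arbitrary $C^{1}$ quaternionic functions $f',f''$, so that the ``almost everywhere'' hypothesis plays no role in the calculation itself (it is only used to ensure that the formal identity is meaningful pointwise off the exceptional hypersurfaces).
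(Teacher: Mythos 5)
Your plan coincides with the paper's own proof: it expands $f'*f''=(f'_1f''_1-f'_2\overline{f''_2})+(f'_1f''_2+f'_2\overline{f''_1}){\bf j}$, applies the Leibniz rule componentwise, and regroups the terms according to whether the derivative falls on $f'$ (yielding the term written ${\mathcal D}f'*{\bf j}f''$) or on $f''$ (yielding $\big(f'\frac{\partial}{\partial\overline z_1}+\overline f'{\bf j}\frac{\partial}{\partial\overline z_2}\big)f''$), exactly as in the paper. Your remark that the hyperholomorphy equations \eqref{Eq1} are never used and that the identity is a purely algebraic Leibniz-type property is consistent with the paper's computation, which likewise does not invoke them.
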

\begin{proof}
$f'=f'_1+f'_2{\bf j}$, $f''=f''_1+f''_2{\bf j}$ be two hyperholomorphic functions. 
\medskip

We have: $f'*f''=(f'_1+f'_2{\bf j})(f''_1+f''_2{\bf j})
= f'_1f''_1- f'_2\overline f''_2+(f'_1f''_2+f'_2\overline f''_1){\bf j}$
\smallskip

\noindent Compute
$$
\frac{1}{2}\big(\frac{\partial}{\partial\overline z_1}
+{\bf j}\frac{\partial}{\partial\overline z_2}\big) \big(f'_1f''_1- f'_2\overline f''_2+(f'_1f''_2+f'_2\overline f''_1){\bf j}\big)
$$
By derivation of the first factors of the sum $f'*f''$, we get the first term: 
$$
\frac{1}{2}\big(\frac{\partial f'_1}{\partial\overline z_1}
+{\bf j}\frac{\partial f'_1}{\partial\overline z_2}\big)(f''_1+f''_2{\bf j}) +\frac{1}{2}\big(\frac{\partial f'_2}{\partial\overline z_1}
+{\bf j}\frac{\partial f'_2}{\partial\overline z_2}\big) {\bf j}{\bf j}(\overline f''_2-\overline f''_1{\bf j}) 
$$
$$
=\frac{1}{2}\big(\frac{\partial f'_1}{\partial\overline z_1}
+{\bf j}\frac{\partial f'_1}{\partial\overline z_2}\big)(f''_1+f''_2{\bf j}) +\frac{1}{2}\big(\frac{\partial f'_2{\bf j}}{\partial\overline z_1}
+{\bf j}\frac{\partial f'_2{\bf j}}{\partial\overline z_2}\big) {\bf j}(f''_2{\bf j}+f''_1)= {\mathcal D}f'*{\bf j}f''
$$
By derivation in 
$$
\frac{1}{2}\big(\frac{\partial}{\partial\overline z_1}
+{\bf j}\frac{\partial}{\partial\overline z_2}\big) \big(f'_1f''_1+f'_2{\bf j} f''_2{\bf j}+(f'_1f''_2{\bf j}+f'_2{\bf j}f''_1)\big)
$$
of the second factors of the sum $f'*f''$, we get the second term (up to factor~$\frac{1}{2}$):

\begin{multline*}
f'_1\frac{\partial f''_1}{\partial\overline z_1}+\overline f'_1{\bf j}\frac{\partial f''_1}{\partial\overline z_2}+f'_1\frac{\partial f''_2}{\partial\overline z_1}{\bf j}+\overline f'_1{\bf j}\frac{\partial f''_2}{\partial\overline z_2}{\bf j}
\\
+f'_2{\bf j}\frac{\partial f''_2}{\partial\overline z_1}{\bf j}+\overline f'_2{\bf j}\frac{\partial f''_2}{\partial\overline z_2}+f'_2{\bf j}\frac{\partial f''_1}{\partial\overline z_1}+\overline f'_2{\bf j}{\bf j}\frac{\partial f''_1}{\partial\overline z_2}
\\
=(f'_1+f'_2{\bf j})(\frac{\partial }{\partial\overline z_1})(f''_1+ f''_2{\bf j}) +(\overline f'_1+\overline f'_2{\bf j}){\bf j}\frac{\partial }{\partial\overline z_2}(f''_1+ f''_2{\bf j})
\\
= \big((f'_1+f'_2{\bf j})(\frac{\partial }{\partial\overline z_1})+(\overline f'_1+\overline f'_2{\bf j}){\bf j}\frac{\partial }{\partial\overline z_2}\big)(f''_1+ f''_2{\bf j})
\\
= \big(f'(\frac{\partial }{\partial\overline z_1})+\overline f'{\bf j}\frac{\partial }{\partial\overline z_2}\big)f''.
\end{multline*}
\end{proof} 

\section{Almost everywhere hyperholomorphic functions whose inverses are almost everywhere hyperholomorphic}\label{Sec3}
\subsection{Definitions}

We call {\it inverse} of a quaternionic function $f: q\mapsto f(q)$, the function defined almost everywhere on $U$: $q\mapsto f(q)^{-1}$; then: $f^{-1}=\vert f\vert^{-1}\overline f$, where $\overline f$ is the (quaternionic) conjugate of $f$, then: $f^{-1}=\vert f\vert^{-1}(\overline f_1-f_2{\bf j}).$ 

Behavior of $f^{-1}$ at $q\in Z(f)$. Let $n_1= sup J_q^\alpha (\vert f\vert\overline f_1^{-1})$; $n_2= sup J_q^\alpha (\vert f\vert\overline f_2^{-1})$.

Define : $n_q= sup \ n_i$, $i=1,2$ as the {\it order of the pole $q$ of $f^{-1}$.}

\subsection{Characterisation}

\begin{prop}{\label{Proposition 3.1} \it The following conditions are equivalent:

(i) the function $f$ and its right inverse are hyperholomorphic, when they are defined;

(ii) we have the equations: 

$$(\overline f_1-f_1)\frac{\partial\overline f_1}{\partial z_1
}-\overline f_2\frac{\partial f_2}{\partial z_1} -f_2\frac{\partial\overline f_1}{\partial\overline z_2}=0$$

$$\overline f_2\frac{\partial f_1}{\partial z_1}+\frac{\partial \overline f_2}{\partial z_1}(\overline f_1-f_1)-f_2\frac{\partial\overline f_2}{\partial \overline z_2}=0$$
}\end{prop}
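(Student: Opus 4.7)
The hyperholomorphicity of $f$ is implicit in~(i), so equation~(\ref{Eq1}) is available throughout; the task reduces to showing that, assuming $f$ satisfies~(\ref{Eq1}), the hyperholomorphicity of $f^{-1}$ is equivalent to the system~(ii). My plan is to rewrite the two hyperholomorphicity equations for $f^{-1}$ so that only $\partial/\partial z_1$ and $\partial/\partial\overline z_2$ derivatives appear (as in~(ii)), and then to recognise the resulting pair as an invertible linear combination of the two equations $E_1=0$ and $E_2=0$, where $E_1, E_2$ denote the left hand sides of~(ii).

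From $f^{-1}=|f|^{-1}(\overline f_1-f_2{\bf j})$ the components are $(f^{-1})_1=\overline f_1/|f|$ and $(f^{-1})_2=-f_2/|f|$. Applying~(\ref{Eq1}) to $f^{-1}$ yields two PDEs; I would retain the second one and replace the first by its complex conjugate, an equivalent statement whose derivatives sit where~(ii) wants them. Multiplying each equation by $|f|^2$, expanding via the quotient rule with $|f|=f_1\overline f_1+f_2\overline f_2$, and using~(\ref{Eq1}) and its conjugate (e.g.\ $\partial_{\overline z_2}f_1=-\partial_{z_1}\overline f_2$, $\partial_{\overline z_2}f_2=\partial_{z_1}\overline f_1$) to eliminate every $\overline z_1$- and $\overline z_2$-partial of $f_1,f_2$, I expect the two resulting identities to collapse into
\[
f_1 E_1+f_2 E_2=0\qquad\text{and}\qquad \overline f_1 E_2-\overline f_2 E_1=0.
\]
The coefficient determinant is $f_1\overline f_1+f_2\overline f_2=|f|\neq 0$ wherever $f^{-1}$ is defined, so this $2\times 2$ system is equivalent to $E_1=E_2=0$, i.e.\ to~(ii). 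Every step is reversible, which yields both implications.

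The main obstacle is clerical: tracking signs and conjugations so that, after the substitutions from~(\ref{Eq1}), the coefficients of the six derivatives occurring in~(ii) emerge exactly as written. The key regrouping is that each $|f|\cdot\partial\overline f_j/\partial z_1$ term combines with the corresponding $-f_j\partial_{z_1}|f|$ to leave precisely the factor $(\overline f_1-f_1)$, and that the remaining cross-terms sort themselves into the rows $(f_1,f_2)$ and $(-\overline f_2,\overline f_1)$ above.
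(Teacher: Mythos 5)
Your plan is correct and is essentially the paper's own argument: the paper likewise writes $f^{-1}=|f|^{-1}(\overline f_1-f_2{\bf j})$, computes $\mathcal D(f^{-1})$ by the quotient rule, multiplies by $|f|^2$, conjugates the first component equation, and simplifies with \eqref{Eq1}; the two equations it reaches are precisely your $f_1E_1+f_2E_2=0$ and $\overline f_1E_2-\overline f_2E_1=0$. The only difference is presentational: the paper applies the inverse of your $2\times2$ matrix by hand (multiplying the two equations by $\overline f_1,-f_2$, respectively $\overline f_2,f_1$, summing and factoring out $|f|=f_1\overline f_1+f_2\overline f_2$), whereas you conclude directly from the nonvanishing of that determinant wherever $f^{-1}$ is defined.
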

\begin{proof}
Let $f=f_1+f_2{\bf j}$ be a hyperholomorphic function and $g=g_1+g_2{\bf j}=\displaystyle \vert f\vert^{-1}(\overline f_1-f_2{\bf j})$ its inverse; so $g_1=\vert f\vert^{-1}\overline f_1$; $g_2=-\vert f\vert^{-1}f_2$, 
where $\vert f\vert=(f_1\overline f_1+f_2\overline f_2)$.
\begin{eqnarray*}
{\mathcal D}g(q)&=&\frac{1}{2}\big(\frac{\partial}{\partial\overline z_1}
+{\bf j}\frac{\partial}{\partial\overline z_2}\big) g(q)
=\frac{1}{2}\big({\frac{\partial
g_1}{\partial\overline z_1}-\frac{\partial\overline g_2}{\partial z_2}}\big)
(q)+{\bf j}\frac{1}{2}\big({\frac{\partial g_1}{\partial\overline
z_2}+\frac{\partial\overline g_2}{\partial z_1}}\big) (q)
\\
\frac{\partial g_1}{\partial\overline z_1}&=& \vert f\vert^{-1}\frac{\partial\overline f_1}{\partial \overline z_1}-\vert f\vert^{-2} \overline f_1\big(\frac{\partial f_1}{\partial \overline z_1}\overline f_1+f_1\frac{\partial\overline f_1}{\partial\overline z_1}+\frac{\partial f_2}{\partial \overline z_1}\overline f_2+f_2\frac{\partial\overline f_2}{\partial \overline z_1}\big)
\\
-\frac{\partial\overline g_2}{\partial z_2}&=& \vert f\vert^{-1}\frac{\partial\overline f_2}{\partial z_2}-\vert f\vert^{-2} \overline f_2\big(\frac{\partial f_1}{\partial z_2}\overline f_1+f_1\frac{\partial\overline f_1}{\partial z_2}+\frac{\partial f_2}{\partial z_2}\overline f_2+f_2\frac{\partial\overline f_2}{\partial z_2}\big)
\\
\frac{\partial g_1}{\partial\overline z_2}&=& \vert f\vert^{-1}\frac{\partial\overline f_1}{\partial \overline z_2}-\vert f\vert^{-2} \overline f_1\big(\frac{\partial f_1}{\partial \overline z_2}\overline f_1+f_1\frac{\partial\overline f_1}{\partial \overline z_2}+\frac{\partial f_2}{\partial \overline z_2}\overline f_2+f_2\frac{\partial\overline f_2}{\partial \overline z_2}\big)
\\
\frac{\partial\overline g_2}{\partial z_1}&=& - \vert f\vert^{-1}\frac{\partial\overline f_2}{\partial z_1}+\vert f\vert^{-2} \overline f_2\big(\frac{\partial f_1}{\partial z_1}\overline f_1+f_1\frac{\partial\overline f_1}{\partial z_1}+\frac{\partial f_2}{\partial z_1}\overline f_2+f_2\frac{\partial\overline f_2}{\partial z_1}\big)
\end{eqnarray*}
\begin{eqnarray*}
&&\hspace*{-36pt}2\vert f\vert^2{\mathcal D}g
\\
&=&(f_1\overline f_1+f_2\overline f_2)(\frac{\partial\overline f_1}{\partial \overline z_1}+\frac{\partial\overline f_2}{\partial z_2})-\overline f_1 f_1\frac{\partial \overline f_1}{\partial \overline z_1}-\overline f_1\overline f_1\frac{\partial f_1}{\partial \overline z_1}-\overline f_1f_2\frac{\partial\overline f_2}{\partial \overline z_1}-\overline f_1\overline f_2\frac{\partial f_2}{\partial \overline z_1}
\\
&&-\overline f_1\overline f_2\frac{\partial f_1}{\partial z_2}- f_1\overline f_2\frac{\partial\overline f_1}{\partial z_2}-
\overline f_2\overline f_2\frac{\partial f_2}{\partial z_2}- f_2\overline f_2\frac{\partial\overline f_2}{\partial z_2}
\\
&&+{\bf j}\Big((f_1\overline f_1+f_2\overline f_2)(\frac{\partial\overline f_1}{\partial \overline z_2}-\frac{\partial\overline f_2}{\partial z_1})-\overline f_1\overline f_1\frac{\partial f_1}{\partial \overline z_2}-\overline f_1 f_1\frac{\partial \overline f_1}{\partial \overline z_2}-\overline f_1\overline f_2\frac{\partial f_2}{\partial \overline z_2}-\overline f_1 f_2\frac{\partial\overline f_2}{\partial \overline z_2}
\\
&&+\overline f_1\overline f_2\frac{\partial f_1}{\partial z_1}+ f_1\overline f_2\frac{\partial\overline f_1}{\partial z_1}+
\overline f_2\overline f_2\frac{\partial f_2}{\partial z_1}+ f_2\overline f_2\frac{\partial\overline f_2}{\partial z_1}\Big)\end{eqnarray*}
Use the fact: $f$ is hyperholomorphic: 
$$
{\displaystyle\frac{\partial
f_1}{\partial\overline z_1}-\frac{\partial\overline f_2}{\partial z_2}}=0; \ {\displaystyle\frac{\partial f_1}{\partial\overline
z_2}+\frac{\partial\overline f_2}{\partial z_1}}=0 \leqno{\eqref{Eq1}}
$$ 
\begin{multline*}
2\vert f\vert^2{\mathcal D}g=\\ f_1\overline f_1\frac{\partial\overline f_2}{\partial z_2} +f_2\overline f_2\frac{\partial\overline f_1}{\partial \overline z_1} -\overline f_1\overline f_1\frac{\partial f_1}{\partial \overline z_1}-\overline f_1f_2\frac{\partial\overline f_2}{\partial \overline z_1}-\overline f_1\overline f_2\frac{\partial f_1}{\partial z_2}-
\overline f_2\overline f_2\frac{\partial f_2}{\partial z_2}+\overline f_2\frac{\partial f_2}{\partial \overline z_1}(f_1-\overline f_1)+
\\+{\bf j}\Big(+f_2\overline f_2\frac{\partial\overline f_1}{\partial \overline z_2}-\overline f_1\overline f_2\frac{\partial f_2}{\partial \overline z_2}-\overline f_1 f_2\frac{\partial\overline f_2}{\partial \overline z_2}+\overline f_1\frac{\partial f_1}{\partial \overline z_2}(f_1-\overline f_1)
+\overline f_1\overline f_2\frac{\partial f_1}{\partial z_1}+ f_1\overline f_2\frac{\partial\overline f_1}{\partial z_1}+
\overline f_2\overline f_2\frac{\partial f_2}{\partial z_1}\Big)
\end{multline*}

$f$ being hyperholomorphic, $g$ hyperholomorphic is equivalent to the system of two equations:

$$
+f_1\overline f_1\frac{\partial\overline f_2}{\partial z_2} +f_2\overline f_2\frac{\partial\overline f_1}{\partial \overline z_1} -\overline f_1\overline f_1\frac{\partial f_1}{\partial \overline z_1}-\overline f_1f_2\frac{\partial\overline f_2}{\partial \overline z_1}-\overline f_1\overline f_2\frac{\partial f_1}{\partial z_2}-
\overline f_2\overline f_2\frac{\partial f_2}{\partial z_2}+\overline f_2\frac{\partial f_2}{\partial \overline z_1}(f_1-\overline f_1)=0
$$
$$
+f_2\overline f_2\frac{\partial\overline f_1}{\partial \overline z_2}-\overline f_1\overline f_2\frac{\partial f_2}{\partial \overline z_2}-\overline f_1 f_2\frac{\partial\overline f_2}{\partial \overline z_2}+\overline f_1\frac{\partial f_1}{\partial \overline z_2}(f_1-\overline f_1)
+\overline f_1\overline f_2\frac{\partial f_1}{\partial z_1}+ f_1\overline f_2\frac{\partial\overline f_1}{\partial z_1}+
\overline f_2\overline f_2\frac{\partial f_2}{\partial z_1}=0
$$

$f_1$ and $f_2$ satisfy, by conjugation of the second equation:
$$ +f_2\overline f_2\frac{\partial f_1}{\partial z_1} - f_1 f_1\frac{\partial\overline f_1}{\partial z_1}-f_1\overline f_2\frac{\partial f_2}{\partial z_1} + f_2\frac{\partial\overline f_2}{\partial z_1}(\overline f_1-f_1) +f_1\overline f_1\frac{\partial f_2}{\partial\overline z_2}-f_1f_2\frac{\partial \overline f_1}{\partial\overline z_2}-f_2f_2\frac{\partial\overline f_2}{\partial\overline z_2}=0
$$
$$
+\overline f_1\overline f_2\frac{\partial f_1}{\partial z_1}+ f_1\overline f_2\frac{\partial\overline f_1}{\partial z_1}.+
\overline f_2\overline f_2\frac{\partial f_2}{\partial z_1}+f_2\overline f_2\frac{\partial\overline f_1}{\partial \overline z_2}-\overline f_1\overline f_2\frac{\partial f_2}{\partial \overline z_2}-\overline f_1 f_2\frac{\partial\overline f_2}{\partial \overline z_2}+\overline f_1\frac{\partial f_1}{\partial \overline z_2}(f_1-\overline f_1)
=0
$$ 
Using $\eqref{Eq1}$, we get: 
$$
+f_2\overline f_2\frac{\partial f_1}{\partial z_1} + f_1 (\overline f_1- f_1)\frac{\partial\overline f_1}{\partial z_1}-f_1\overline f_2\frac{\partial f_2}{\partial z_1} + f_2\frac{\partial\overline f_2}{\partial z_1}(\overline f_1-f_1) -f_1f_2\frac{\partial \overline f_1}{\partial\overline z_2}-f_2f_2\frac{\partial\overline f_2}{\partial\overline z_2}=0
$$
$$
+\overline f_1\overline f_2\frac{\partial f_1}{\partial z_1}+ (f_1-\overline f_1)\overline f_2\frac{\partial\overline f_1}{\partial z_1}+
\overline f_2\overline f_2\frac{\partial f_2}{\partial z_1}+\overline f_1\frac{\partial f_1}{\partial \overline z_2}(f_1-\overline f_1)+f_2\overline f_2\frac{\partial\overline f_1}{\partial \overline z_2}.
-\overline f_1 f_2\frac{\partial\overline f_2}{\partial \overline z_2}=0
$$

Assume $f_1\not =0$, $f_2\not =0$

$$
\overline f_1 \big(f_2\overline f_2\frac{\partial f_1}{\partial z_1} + f_1 (\overline f_1- f_1)\frac{\partial\overline f_1}{\partial z_1}-f_1\overline f_2\frac{\partial f_2}{\partial z_1} + f_2\frac{\partial\overline f_2}{\partial z_1}(\overline f_1-f_1) -f_1f_2\frac{\partial \overline f_1}{\partial\overline z_2}-f_2f_2\frac{\partial\overline f_2}{\partial\overline z_2}\big)=0
$$
$$
-f_2\big(+\overline f_1\overline f_2\frac{\partial f_1}{\partial z_1}+ (f_1-\overline f_1)\overline f_2\frac{\partial\overline f_1}{\partial z_1}+
\overline f_2\overline f_2\frac{\partial f_2}{\partial z_1}
-\overline f_1\frac{\partial\overline f_2}{\partial z_1}(f_1-\overline f_1)+f_2\overline f_2\frac{\partial\overline f_1}{\partial \overline z_2}-\overline f_1 f_2\frac{\partial\overline f_2}{\partial\overline z_2}\big)=0
$$ 

By sum:
$$
\overline f_1 \big( f_1 (\overline f_1- f_1)\frac{\partial\overline f_1}{\partial z_1}-f_1\overline f_2\frac{\partial f_2}{\partial z_1} + f_2\frac{\partial\overline f_2}{\partial z_1}(\overline f_1-f_1) -f_1f_2\frac{\partial \overline f_1}{\partial\overline z_2}\big)
$$
$$
-f_2\big((f_1-\overline f_1)\overline f_2\frac{\partial\overline f_1}{\partial z_1}+
\overline f_2\overline f_2\frac{\partial f_2}{\partial z_1}
-\overline f_1\frac{\partial\overline f_2}{\partial z_1}(f_1-\overline f_1)+f_2\overline f_2\frac{\partial\overline f_1}{\partial \overline z_2}\big)=0
$$
i.e. 
$$
(\overline f_1f_1+f_2\overline f_2)\big ( (\overline f_1- f_1)\frac{\partial\overline f_1}{\partial z_1}-\overline f_2\frac{\partial f_2}{\partial z_1}-f_2\frac{\partial\overline f_1}{\partial \overline z_2}\big )=0
$$

$$
\overline f_2 \big(f_2\overline f_2\frac{\partial f_1}{\partial z_1} + f_1 (\overline f_1- f_1)\frac{\partial\overline f_1}{\partial z_1}-f_1\overline f_2\frac{\partial f_2}{\partial z_1} + f_2\frac{\partial\overline f_2}{\partial z_1}(\overline f_1-f_1) -f_1f_2\frac{\partial \overline f_1}{\partial\overline z_2}-f_2f_2\frac{\partial\overline f_2}{\partial\overline z_2}\big)=0
$$
$$
f_1\big(+\overline f_1\overline f_2\frac{\partial f_1}{\partial z_1}+ (f_1-\overline f_1)\overline f_2\frac{\partial\overline f_1}{\partial z_1}+
\overline f_2\overline f_2\frac{\partial f_2}{\partial z_1}
-\overline f_1\frac{\partial\overline f_2}{\partial z_1}(f_1-\overline f_1)+f_2\overline f_2\frac{\partial\overline f_1}{\partial \overline z_2}-\overline f_1 f_2\frac{\partial\overline f_2}{\partial\overline z_2}\big)=0
$$ 

By sum
$$
\overline f_2 \big(f_2\overline f_2\frac{\partial f_1}{\partial z_1} + f_2\frac{\partial\overline f_2}{\partial z_1}(\overline f_1-f_1) -f_2f_2\frac{\partial\overline f_2}{\partial\overline z_2}\big) 
$$
$$
+f_1\big(\overline f_1\overline f_2\frac{\partial f_1}{\partial z_1}
-\overline f_1\frac{\partial\overline f_2}{\partial z_1}(f_1-\overline f_1)-\overline f_1 f_2\frac{\partial\overline f_2}{\partial\overline z_2}\big)=0
$$ 
i.e.
$$
\overline f_2\frac{\partial f_1}{\partial z_1}+ \frac{\partial\overline f_2}{\partial z_1}(\overline f_1-f_1)-f_2\frac{\partial\overline f_2}{\partial\overline z_2}=0 
$$
\end{proof}
\subsection{Definition} We will call {\it w-hypermeromorphic function} (w- for {\it weak}) any almost everywhere defined hyperholomorphic function whose right inverse is hyperholomorphic almost everywhere.

\newpage\section{On the spaces of hypermeromorphic functions}\label{Sec4}

\subsection{Sum of two w-hypermeromorphic functions}

\begin{prop}{\it If $f$ and $g$ are two w-hypermeromorphic functions, then the following conditions are equivalent:
\begin{enumerate}
\item[$(i)$] the sum $h=f+g$ is w-hypermeromorphic; 
\item[$(ii)$] $h$ satisfies the following PDE:
$$
-\big( \frac{\partial \vert h\vert}{\partial\overline z_1}+{\bf j}\frac{\partial\vert h\vert}{\partial\overline z_2}\big) (\overline h_1-h_2{\bf j})+\vert h\vert {\big(\frac{\partial}{\partial\overline z_1}
+{\bf j}\frac{\partial}{\partial\overline z_2}\big)}(\overline h_1-h_2{\bf j})=0
$$
\end{enumerate}
}\end{prop}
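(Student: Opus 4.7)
The plan is to observe that w-hypermeromorphy splits into two separate requirements: (a) the function itself is hyperholomorphic a.e., and (b) its right inverse is hyperholomorphic a.e. Since the Cauchy--Fueter operator $\mathcal{D}$ is $\R$-linear,
$$\mathcal{D}(f+g) = \mathcal{D}f + \mathcal{D}g = 0$$
automatically, so requirement (a) for $h = f+g$ is free. Thus the entire content of the proposition is the equivalence between (b) for $h$, i.e.\ $\mathcal{D}(h^{-1})=0$, and the stated PDE.

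First I would substitute the explicit form of the inverse given at the beginning of Section~3, namely
$$h^{-1} = |h|^{-1}\overline{h} = |h|^{-1}(\overline{h}_1 - h_2 \mathbf{j}),$$
into $\mathcal{D}(h^{-1})$. The key simplifying observation is that the factor $|h|^{-1}$ is a real (scalar) function, hence it commutes with $\mathbf{j}$; this is exactly what allows a clean Leibniz rule for $\mathcal{D}$ applied to a product $\phi \psi$ with $\phi$ real:
$$\left(\frac{\partial}{\partial\overline z_1} + \mathbf{j}\frac{\partial}{\partial\overline z_2}\right)(\phi\psi) = \left(\frac{\partial\phi}{\partial\overline z_1} + \mathbf{j}\frac{\partial\phi}{\partial\overline z_2}\right)\psi + \phi\left(\frac{\partial}{\partial\overline z_1} + \mathbf{j}\frac{\partial\phi}{\partial\overline z_2}\right)\psi.$$
Applying this with $\phi = |h|^{-1}$ and $\psi = \overline h_1 - h_2\mathbf{j}$, and using $\partial_{\overline z_k}(|h|^{-1}) = -|h|^{-2}\,\partial_{\overline z_k}|h|$, I would arrive at
$$2\,\mathcal{D}(h^{-1}) = -|h|^{-2}\Bigl(\tfrac{\partial|h|}{\partial\overline z_1} + \mathbf{j}\tfrac{\partial|h|}{\partial\overline z_2}\Bigr)(\overline h_1 - h_2\mathbf{j}) + |h|^{-1}\Bigl(\tfrac{\partial}{\partial\overline z_1} + \mathbf{j}\tfrac{\partial}{\partial\overline z_2}\Bigr)(\overline h_1 - h_2\mathbf{j}).$$

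Multiplying through by $|h|^2$ (which is nonzero wherever $h^{-1}$ is defined) yields exactly the PDE of condition (ii). Conversely, if that PDE holds, dividing by $|h|^2$ recovers $\mathcal{D}(h^{-1}) = 0$; combined with the automatic hyperholomorphy of $h$ noted above, this gives that $h$ is w-hypermeromorphic.

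The only real obstacle is bookkeeping of the noncommutative factors, specifically making sure the $\mathbf{j}$ in the operator $\mathcal{D}$ stays on the left of the differentiated quantity, and verifying that the Leibniz rule for $\mathcal{D}$ is indeed clean when one factor is real-valued; this is the point at which the fact that $|h|$ is real (as opposed to merely complex) is essential.
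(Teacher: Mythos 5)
Your proposal is correct and follows essentially the same route as the paper: reduce to $\mathcal{D}(h^{-1})=0$ (hyperholomorphy of $h$ itself being automatic by linearity of $\mathcal{D}$) and expand $|h|^{2}\mathcal{D}(h^{-1})=-\mathcal{D}(|h|)\,\overline h+|h|\,\mathcal{D}(\overline h)$ using that $|h|$ is real, which is exactly the identity the paper's proof starts from. Only note the typo in your displayed Leibniz rule, where the last term should read $\phi\bigl(\frac{\partial}{\partial\overline z_1}+\mathbf{j}\frac{\partial}{\partial\overline z_2}\bigr)\psi$ rather than containing an extra $\frac{\partial\phi}{\partial\overline z_2}$.
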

\begin{proof}Explicit the condition: $${\vert h\vert}^2{\mathcal D}(h^{-1})=-{\mathcal D}(\vert h)\vert)(\overline {h})+\vert h\vert {\mathcal D}(\overline {h})=0;$$with $\overline h=\overline h_1-h_2{\bf j}$

$$
2{\mathcal D}\overline h=\big(\frac{\partial}{\partial\overline z_1}
+{\bf j}\frac{\partial}{\partial\overline z_2}\big)(\overline h_1-h_2{\bf j})= \frac{\partial\overline h_1}{\partial\overline z_1}+\frac{\partial \overline h_2}{\partial z_2}-\big(\frac{\partial h_2}{\partial\overline z_1}-\frac{\partial h_1}{\partial z_2}\big){\bf j}
$$

\begin{multline*}
{\mathcal D}(\vert h\vert)={\mathcal D}(h_1\overline h_1+h_2\overline h_2)=\frac{1}{2}\big(\frac{\partial}{\partial\overline z_1}
+{\bf j}\frac{\partial}{\partial\overline z_2}\big)(h_1\overline h_1+h_2\overline h_2)
\\
=
\frac{1}{2}\big(\overline h_1\frac{\partial h_1}{\partial\overline z_1} +\overline h_2\frac{\partial h_2}{\partial\overline z_1} +h_1\frac{\partial\overline h_1}{\partial\overline z_1} +h_2\frac{\partial\overline h_2}{\partial\overline z_1}\big)
\\
+\frac{1}{2}\big(\overline h_1\frac{\partial h_1}{\partial z_2} +\overline h_2\frac{\partial h_2}{\partial z_2}+h_1\frac{\partial\overline h_1}{\partial z_2} +h_2\frac{\partial\overline h_2}{\partial z_2}\big){\bf j}=0.
\end{multline*}\end{proof}

\subsection{Product of two w-hypermeromorphic functions}

\begin{prop}{\it Let $f$, $g$ be two w-hypermeromorphic functions on $U$, then the following conditions are equivalent:
\begin{enumerate}
\item[$(i)$] the product $f*g$ is w-hypermeremorphic;
\item[$(ii)$] $f$ and $g$ satisfy the system of PDE:

$$
g_1(\frac{\partial
f_1}{\partial\overline z_1}+\frac{\partial\overline f_2}{\partial z_2})+(f_1-\overline f_1)\frac{\partial g_1}{\partial\overline z_1}+\overline f_2\frac{\partial g_1}{\partial z_2}-f_2\frac{\partial\overline g_2}{\partial\overline z_1}=0
$$

$$
g_1(\frac{\partial
f_1}{\partial\overline z_2}-\frac{\partial\overline f_2}{\partial z_1})+(f_1-\overline f_1)\frac{\partial g_1}{\partial\overline z_2}-\overline f_2\frac{\partial g_1}{\partial z_1}-f_2\frac{\partial\overline g_2}{\partial\overline z_2}=0
$$
\end{enumerate}
}\end{prop}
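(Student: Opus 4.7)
The plan is to reduce both implications to a direct computation of $\mathcal{D}(f*g)$, leveraging the product rule of Proposition~\ref{Proposition 2.3.2}, and to handle the companion condition on $(f*g)^{-1}$ by symmetry.

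First I will note that, since $f$ and $g$ are w-hypermeromorphic, both $\mathcal{D}f = 0$ and $\mathcal{D}g = 0$ almost everywhere, so the first term $\mathcal{D}f * \mathbf{j}g$ of the Leibniz identity in Proposition~\ref{Proposition 2.3.2} vanishes and
$$\mathcal{D}(f*g) = \Big(f\frac{\partial}{\partial\overline z_1} + \overline f\,\mathbf{j}\frac{\partial}{\partial\overline z_2}\Big)g.$$
Setting this quaternion to zero and separating the complex and $\mathbf{j}$ parts with the help of $\mathbf{j}c = \overline c\,\mathbf{j}$ for $c\in\C$, together with the hyperholomorphicity relations \eqref{Eq1} for $g$ (which let us replace $\partial_{\overline z_1}g_2$ and $\partial_{\overline z_2}g_2$ by derivatives of $\overline g_1$, and $\partial_{z_2}\overline g_2$ by $\partial_{\overline z_1}g_1$), yields two scalar PDEs involving $f_1, f_2, \overline f_1, \overline f_2$ and the first derivatives of $g_1, \overline g_2$.

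To cast these scalar PDEs in the form of (I), (II) I will apply the Leibniz rule \emph{in reverse}, regrouping $f_1\partial_{\overline z_1}g_1 + g_1\partial_{\overline z_1}f_1$ into $\partial_{\overline z_1}(f_1 g_1)$ and $\overline f_2\partial_{z_2}g_1 + g_1\partial_{z_2}\overline f_2$ into $\partial_{z_2}(\overline f_2 g_1)$, and analogously for the second equation with $\partial_{\overline z_2}$ and $\partial_{z_1}$. The hyperholomorphicity of $f$ (equations \eqref{Eq1}) makes the terms collected in front of $\overline g_2$ cancel, while the combinations $g_1(\partial_{\overline z_1}f_1 + \partial_{z_2}\overline f_2)$ and $g_1(\partial_{\overline z_2}f_1 - \partial_{z_1}\overline f_2)$ remain as the coefficients of $g_1$ in (I) and (II) respectively. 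The surviving pieces $(f_1 - \overline f_1)\partial_{\overline z_i}g_1$, $\pm\overline f_2\partial_{z_j}g_1$ and $-f_2\partial_{\overline z_i}\overline g_2$ then line up with the remaining coefficients of the stated system.

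Finally I must check that (ii) also implies $\mathcal{D}((f*g)^{-1}) = 0$. Since $(f*g)^{-1} = g^{-1}*f^{-1}$ and both $f^{-1}$, $g^{-1}$ are hyperholomorphic by the w-hypermeromorphicity of $f$ and $g$, the same argument applied to the pair $(g^{-1}, f^{-1})$ produces a companion system. Using $f^{-1} = \vert f\vert^{-1}\overline f$ and $g^{-1} = \vert g\vert^{-1}\overline g$ and substituting into this companion system, one verifies that it is an algebraic consequence of (I)--(II) together with the Proposition~\ref{Proposition 3.1} PDEs for $f$ and for $g$. I expect this last step to be the main obstacle: one must manage the conjugations introduced by $\mathbf{j}$ and the $\vert f\vert, \vert g\vert$ prefactors carefully, and confirm that the four scalar relations coming from $\mathcal{D}(f*g)=0$ and $\mathcal{D}(g^{-1}*f^{-1})=0$ collapse to exactly the pair (I)--(II), without producing any further independent constraint.
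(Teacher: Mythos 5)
Your proposal does not follow the paper's route, and the route it does follow has a gap that cannot be repaired as described. The paper proves the equivalence by brute force: it writes out the components of the product, $f*g=f_1g_1-f_2\overline g_2+(\ldots)\mathbf{j}$, writes the two scalar equations \eqref{Eq1} for this product, uses \eqref{Eq1} for $f$ to kill the coefficient of $\overline g_2$, and \eqref{Eq1} for $g$ to merge $f_1\frac{\partial g_1}{\partial\overline z_1}-\overline f_1\frac{\partial\overline g_2}{\partial z_2}$ into $(f_1-\overline f_1)\frac{\partial g_1}{\partial\overline z_1}$; it never invokes Proposition~\ref{Proposition 2.3.2}. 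Your shortcut, by contrast, discards the $\mathcal D f$ term and keeps only $\big(f\frac{\partial}{\partial\overline z_1}+\overline f\,\mathbf{j}\frac{\partial}{\partial\overline z_2}\big)g$. Expanding that expression in components, its two scalar parts involve only the derivatives $\frac{\partial g_1}{\partial\overline z_1}$, $\frac{\partial g_1}{\partial\overline z_2}$, $\frac{\partial\overline g_2}{\partial z_1}$, $\frac{\partial\overline g_2}{\partial z_2}$ (exactly the ones constrained by \eqref{Eq1} for $g$) and no derivative of $f$ at all. The target system, however, contains $\frac{\partial\overline g_2}{\partial\overline z_1}$, $\frac{\partial g_1}{\partial z_2}$ (and in the second equation $\frac{\partial\overline g_2}{\partial\overline z_2}$, $\frac{\partial g_1}{\partial z_1}$), which \eqref{Eq1} does not control, together with the terms $g_1\big(\frac{\partial f_1}{\partial\overline z_1}+\frac{\partial\overline f_2}{\partial z_2}\big)=2g_1\frac{\partial f_1}{\partial\overline z_1}$, which are not generically zero. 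Your ``reverse Leibniz'' regrouping cannot conjure these: no rearrangement plus \eqref{Eq1} turns an expression built solely from the $\mathcal D$-type derivatives of $g$ into one containing the conjugate-type derivatives and the $f$-derivative terms above. The root of the trouble is that the componentwise operators $\frac{\partial}{\partial\overline z_i}$ do not obey the naive Leibniz rule with respect to the noncommutative product: for instance the second-factor derivative of the component $-f_2\overline g_2$ is $-f_2\frac{\partial\overline g_2}{\partial\overline z_1}$, whereas the formula of Proposition~\ref{Proposition 2.3.2} would produce $-f_2\frac{\partial\overline g_2}{\partial z_1}$. So leaning on Proposition~\ref{Proposition 2.3.2} to transfer all derivatives onto $g$ loses precisely the cross terms that constitute system (ii); this is presumably why the paper's own proof redoes the expansion from scratch.

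Concerning the last step: you are right to observe that w-hypermeromorphy of $f*g$ formally also demands $\mathcal D((f*g)^{-1})=0$ almost everywhere, a condition the paper's proof does not address (it only establishes the equivalence with $\mathcal D(f*g)=0$). But your handling of it is a statement of intent rather than a proof: you assert that the companion system for $(g^{-1},f^{-1})$ is ``an algebraic consequence'' of (I)--(II) and of the equations of Proposition~\ref{Proposition 3.1} for $f$ and $g$, you give no computation, and you yourself flag it as the main obstacle. As written, this part is unproved, and it is not evident that it is even true; to be complete you must either carry out that verification or, as the paper implicitly does, interpret condition (i) as the almost-everywhere hyperholomorphy of the product alone.
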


\begin{proof}
Let $f=f_1+f_2{\bf j}$ and $g=g_1+g_2{\bf j}$ two hypermeromorphic functions and $f*g=f_1g_1-f_2\overline g_2+(f_1g_2-f_2\overline g_1){\bf j}$ their product, then
\begin{eqnarray*}
&&\frac{\partial f_1}{\partial\overline z_1}-\frac{\partial\overline f_2}{\partial z_2}=0;
\\
&&\frac{\partial
(f_1g_1-f_2\overline g_2)}{\partial\overline z_1}-\frac{\partial(\overline f_1\overline g_2-\overline f_2g_1)}{\partial z_2}\\
&&\qquad=g_1(\frac{\partial
f_1}{\partial\overline z_1}+\frac{\partial\overline f_2}{\partial z_2})-\overline g_2(\frac{\partial\overline f_1}{\partial z_2}+\frac{\partial f_2}{\partial\overline z_1})+f_1\frac{\partial g_1}{\partial\overline z_1}-\overline f_1\frac{\partial\overline g_2}{\partial z_2}+\overline f_2\frac{\partial g_1}{\partial z_2}-f_2\frac{\partial\overline g_2}{\partial\overline z_1}=0
\\
&&g_1(\frac{\partial
f_1}{\partial\overline z_1}+\frac{\partial\overline f_2}{\partial z_2})+f_1\frac{\partial g_1}{\partial\overline z_1}-\overline f_1\frac{\partial\overline g_2}{\partial z_2}+\overline f_2\frac{\partial g_1}{\partial z_2}-f_2\frac{\partial\overline g_2}{\partial\overline z_1}=0.
\\
&&\frac{\partial
(f_1g_1-f_2\overline g_2)}{\partial\overline z_2}+\frac{\partial(\overline f_1\overline g_2-\overline f_2g_1)}{\partial z_1}\\&&\qquad =g_1(\frac{\partial
f_1}{\partial\overline z_2}-\frac{\partial\overline f_2}{\partial z_1})+\overline g_2(\frac{\partial\overline f_1}{\partial z_1}-\frac{\partial f_2}{\partial\overline z_2})+f_1\frac{\partial g_1}{\partial\overline z_2}-\overline f_1\frac{\partial\overline g_2}{\partial z_1}+\overline f_2\frac{\partial g_1}{\partial z_1}-f_2\frac{\partial\overline g_2}{\partial\overline z_2}=0
\\
&&g_1(\frac{\partial
f_1}{\partial\overline z_2}-\frac{\partial\overline f_2}{\partial z_1})+f_1\frac{\partial g_1}{\partial\overline z_2}+\overline f_1\frac{\partial\overline g_2}{\partial z_1}-\overline f_2\frac{\partial g_1}{\partial z_1}-f_2\frac{\partial\overline g_2}{\partial\overline z_2}=0
\end{eqnarray*}
\end{proof}

\subsection{Definition} We will call {\it hypermeromorphic} the w-hypermeromorphic functions \\whose sum and product are w-hypermeromorphic. Their space is nonempty, since it contains the space of the meromorphic functions.

\section{Globalisation. Hamilton 4-manifold}\label{Sec5}

\subsection{\hspace*{-8pt}}The hypermeromorphic functions on a relatively compact open set $U$ of $\H$ play the part of the meromorphic functions on a relatively compact open set $U$ of $\C$. 
We will call {\it pseudoholomorphic function on $U$}, every hypermeromorphic function, without poles on $U$. We will call {\it smooth hypermeromorphic function (sha function) on $U$}, every hypermeromorphic function, without zeroes and poles on $U$. 
\begin{lem}{\it The quotient of two pseudoholomorphic functions on $U$, with the same zeroes and the same orders, is a sha function on $U$.
}\end{lem}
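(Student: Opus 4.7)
The plan is to set $h := f * g^{-1}$, defined almost everywhere on $U$, and verify in turn the three properties defining a sha function on $U$: that $h$ is hypermeromorphic, that $h$ has no pole on $U$, and that $h$ has no zero on $U$. Since $f$ is pseudoholomorphic, it has no pole, so the only candidate points for poles of $h$ are common zeros of $f$ and $g$; by the hypothesis on equal orders these should be cancelled by the zeros of $f$, and symmetrically for zeros of $h$.

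The hypermeromorphy of $h$ I would deduce from the closure clauses of Sections~\ref{Sec3} and~\ref{Sec4}. Since $g$ is hypermeromorphic the right inverse $g^{-1}$ is hyperholomorphic almost everywhere, and since $(g^{-1})^{-1} = g$ is hyperholomorphic almost everywhere as well, $g^{-1}$ is w-hypermeromorphic. Applying the product proposition of \S\ref{Sec4} to the hypermeromorphic $f$ and to $g^{-1}$ yields w-hypermeromorphy of $h = f * g^{-1}$; the symmetric argument applied to $h^{-1} = g * f^{-1}$ gives w-hypermeromorphy of $h^{-1}$, and the closure under sums together with the definition in \S 4.3 upgrades $h$ to hypermeromorphic.

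The removability of pole and zero at each common zero $q_0$ I would handle via the explicit formula
\[
h \;=\; \vert g\vert^{-1}\bigl((f_1\overline g_1+f_2\overline g_2)+(-f_1 g_2+f_2 g_1){\bf j}\bigr),
\]
together with the multiplicative identity $\vert h\vert = \vert f\vert\,\vert g\vert^{-1}$ coming from multiplicativity of $\vert\cdot\vert$ under $*$. Expanding each of $f_1,f_2,g_1,g_2$ in its Taylor jet at $q_0$ and invoking the equality $m_{q_0}(f) = m_{q_0}(g)$, one expects the common vanishing factor to cancel between numerator and denominator in each complex component of $h$, leaving a smooth, nonvanishing value at $q_0$; the positivity of the leading jets of the sums of squares $\vert f\vert$ and $\vert g\vert$ then keeps the scalar $\vert h\vert$ both bounded and strictly positive at $q_0$.

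The main obstacle is precisely this last jet-level cancellation. Two smooth real functions vanishing to the same order at a point can in general have an unbounded quotient, so the cancellation cannot be read off from the scalar identity alone. The proof will have to exploit the hyperholomorphic coupling of $(f_1,f_2)$ and of $(g_1,g_2)$ supplied by \eqref{Eq1}, which constrains the leading jets and should force the numerators $f_1\overline g_1+f_2\overline g_2$ and $-f_1 g_2 + f_2 g_1$ to have the same leading order as $\vert g\vert$. Once this matching of leading jets is verified, each component of $h$ extends smoothly across $q_0$ to a nonzero value, and the three required properties are established.
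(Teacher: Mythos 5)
The paper itself states this lemma without any proof, so there is no argument of the author's to measure you against; your proposal has to stand on its own, and as it stands it has two genuine gaps. First, the reduction of hypermeromorphy of $h=f*g^{-1}$ to the results of Sections~\ref{Sec3} and~\ref{Sec4} does not go through as stated: the sum and product propositions there are \emph{equivalences} — they say that $f+g$, resp.\ $f*g$, is w-hypermeromorphic if and only if a nonlinear PDE system holds — not closure theorems, so ``applying the product proposition to $f$ and $g^{-1}$'' gives you nothing unless you verify that system. Moreover the class of hypermeromorphic functions of \S 4.3 is only known to behave well under sums and products of its members; you would still need to show that $g^{-1}$ belongs to it (only w-hypermeromorphy is symmetric under inversion), and your final ``upgrade'' of $h$ from w-hypermeromorphic to hypermeromorphic is circular, since being hypermeromorphic is a condition on the sums and products formed \emph{with} $h$, which does not follow from $f$ and $g$ being hypermeromorphic.

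Second, and more seriously, the heart of the lemma — that at a common zero $q_0$ the quotient extends smoothly and without zero or pole — is exactly the step you leave open (``once this matching of leading jets is verified\dots''). You correctly observe that for smooth, or even real-analytic, functions on a four-dimensional domain, equal vanishing orders at a point do not force the quotient to be bounded (think of two functions with discrete zeros whose leading homogeneous jets vanish on different directions), so the scalar identity $\vert h\vert=\vert f\vert\,\vert g\vert^{-1}$ and a jet count cannot suffice; the cancellation must come from the structure imposed by \eqref{Eq1} on the leading jets of $(f_1,f_2)$ and $(g_1,g_2)$, and that argument is not carried out. Until that is done, the proposal is a plausible plan — the decomposition into ``no poles, no zeros, hypermeromorphic'' and the use of the explicit formula for $f*g^{-1}$ are sensible — but not a proof; given that the paper offers none either, supplying this jet-level cancellation (or a counterexample delimiting the lemma) is precisely what is missing.
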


\subsection{Manifolds}
The sha functions have been defined on open sets of $\H\cong \C^2$. Let $X$ be a 4-dimensional manifold bearing an atlas ${\mathcal A}$ of charts $(h_j,U_j)$ such as the transition functions $h_{i,j}: U_i\cap U_j\rightarrow \H$ are sha functions. $X=(X,\mathcal A)$ will be called an ${\mathcal A}$-manifold analogous for $\H$ of a Riemann surface for $\C$. I also propose to call an ${\mathcal A}$-manifold a {\it Hamilton 4-manifold}.

\subsection{Sheaves of pseudoholomorphic, hypermeromorphic functions}

\subsubsection{Functions on an $\mathcal A$-manifold $X=(X,\mathcal A)$} 
A map $f:X\rightarrow \H$ is called a {\it pseudoholomorphic function} on $X$, if it is continuous and satisfies the following condition:
for every chart $(h,U)$ of $X$, 
$(f\vert U)h^{-1}: h(U)\rightarrow \H)$ 
is a pseudoholomorphic. 
In the same way, a map $f:X\rightarrow \H$ is called a {\it hypermeromorphic function} on $X$, if it is continuous and satisfies the following condition:for every chart $(h,U)$ of $X$, 
$(f\vert U)h^{-1}: h(U)\rightarrow \H)$ 
is a hypermeromorphic.
\subsubsection{Examples of Hamilton 4-manifold} The identity map of $\H$ is: $z_1+z_2{\bf j}\mapsto z_1+z_2{\bf j}$.

\noindent\underline{Ex. 1:} $(id_{\H},\H)$ is the unique chart of the atlas defining $\H$ as an $\mathcal A$-manifold.
{\it Proof}. The identity map $(id_{\H}$ is $f_1=z_1, f_2=z_2$ is pseudoholomorphic. 

\noindent\underline{Ex.2:} Every open set $V$ of $X$ bears an induced structure of {\it Hamilton 4-manifold}. 

\noindent\underline{Ex. 3:} {\it Hamilton hypersphere $\P$.} 

In the space $\H$x$\H\setminus\{0\}$, consider the equivalence relation $\rho_1 {\mathcal R}\rho_2$: there exists $\lambda\in\H^*=\H\setminus 0$ such that $\rho_2=\rho_1\lambda$ (right multiplication by $\lambda$). The elements of $\H$x$\H\setminus\{0\}$ are the pairs $(q_1,q_2)\not =(0,0)$. Let 

$\pi: \H$x$\H\setminus\{0\}\rightarrow \big(\H$x$\H\setminus\{0\}\big)/{\mathcal R}$ denoted $\P.$ 

\hskip 12mm $(q_1,q_2)\mapsto$ class of $(q_1,q_2)$

So, $\P$ is the set of the quaternionic lines from the origin of $\H^2$.

Consider the pairs $(q_1,q_2)\in \H^2$, with $q_2\not=0$ we have: $\pi(q_1,q_2)=\pi(q_1q_2^{-1},1)$; let $\zeta=q_1q_2^{-1}, q_2\not=0$; in the same way, consider the pairs $(q_1,q_2)\in \H^2$, with $q_1\not=0$ we have: $\pi(q_1,q_2)=\pi(1,q_2q_1^{-1})$; let $\zeta'=q_2q_1^{-1}, q_1\not=0$. The charts $\zeta, \zeta'$ have for domains $U$, $U'$, two open sets of $\P$, respectively homeomorphic to $\H$ forming an atlas of $\P$. Remark that $U$ covers the whole of $\P$ except the point $\pi(q_1,0)$ denoted $\infty$, and that $U'$ covers the whole of $\P$ except the point $\pi(0,q_2)$ denoted $0$. $U'=\P\setminus \{0\}$. Over $U\cap U'$, we have: $\zeta.\zeta'=1$, i.e. $\zeta'=\zeta^{-1}$ and $\zeta=q_1q_2^{-1}$. 

\subsubsection{\hspace*{-8pt}} {\it Pseudoholomorphic map} or {\it morphism.} 

Let $X$ and $Y$ be two Hamilton 4-manifolds, a map $f: X\rightarrow Y$ is said pseudoholomorphic if it is continuous and if, for every pair of pseudoholomorphic charts $(h,U), (k,V)$ such that $f(U)\subset V$, 

$k¡(f\vert U)¡h^{-1}:h(U)\rightarrow k(V)$ be pseudoholomorphic.

\subsubsection{Sheaf of pseudoholomorphic functions} Let $U, V$ be two open sets of $X$ such that $U\subset V$, then, the restrictions to $U$ of the pseudoholomorphic functions on $V$ are pseudoholomorphic on $U$. 

So is defined the {\it sheaf}, denoted $\mathcal P$, of (non commutative rings) {\it of pseudoholomorphic functions on $X$}. The pair (X,$\mathcal P$) is a {\it ringed space}.

In the same way, the {\it sheaf} of non commutative rings, denoted $\mathcal M$, {\it of hypermeromorphic functions is defined on $X$}.

\subsubsection{\it Hamiltonian Submanifolds}They are submanifolds whose function ring is pseudoholomorphic. We will implicitly use the following fact: If $f$ is a pseudoholomorphic or hypermeromorphic function, the same is true for $a+f$, where $a$ is any fixed quaternion. 

The following examples are complex analytic submanifolds. 

\noindent i) $\H$. 
Let $a$ be a fixed quaternion, then $a+\C\subset\H$ {\it is a complex line from $a$ embedded in}~$\H$. 

\noindent ii) $\P$. 
{\it Complex projective line imbedded in} $\P$.
Let $i: z_1\mapsto z_1+z_2{\bf j}$ and $j:\CP\mapsto\P$

$\C\times\C\setminus 0\hskip 10mm\rightarrow \H\times \H\setminus\{0\}$ 

$ i\times i \downarrow \hskip 30mm \downarrow$

$\big(\C\times\C\setminus 0\big)/{\mathcal R}' \rightarrow \big(\H\times\H\setminus\{0\}\big){\mathcal R}$

Let $p\in\P$ be a fixed point. Then, $p+{\C P}^1$ is a {\it complex projective line} (or Riemann sphere) {\it from $p$, embedded in} $\P$.

\noindent iii) Let $S$ be a compact Riemann surface contained in $\P$ as a Hamiltonian submanifold. Then $p+S$ is a {\it compact Riemann surface from $p$, embedded in} $\P$.

\subsubsection{\it A family of complex submanifolds in a Hamilton 4-manifold} We now use the properties and notions of subsection~\ref{Subsec:2.5.2}. 
They are $a+A_{\alpha, \beta}$ and also for restrtictions to an open set $U$ of $\H$.

On a Hamilton 4-manifold $X$ with an atlas $\mathcal
A$ and every domain of chart $U$ as above, we obtain:
\prop{\it Let ($X,\mathcal P$) be a Hamilton 4-manifold. There exist a family of complex analytic curves $C_{b,\gamma,\delta}$, of $X$. For every $U$ domain of coordinates in $\mathcal A$ let $A_{\gamma,\delta}$. By gluing, we get a complex analytic curve in $(X,{\mathcal P})$ from $b\in X$, and $\gamma$, $\delta$ are real parameters.
}

\begin{proof} 
Let $b\in X; \beta,\gamma\in \R$ be given, consider an atlas $\mathcal A$ whose domains of charts are either open sets $U$ of $X$ disjoint from $A_{\beta,\gamma}$, or $V_{\beta,\gamma}=U\cup ( A_{\beta,\gamma}\cap U)$
where $A_{\beta,\gamma}\cap U$ is connected, not empty.
The restrictions of the charts of $\mathcal A$ to the $U\cup ( A_{\beta,\gamma}\cap U)$ define an atlas of $C_{b,\gamma,\delta}$ as complex analytic subvariety of $(X,{\mathcal P})$, in the following way: assume $b\in V_{\beta,\gamma}\cap A_{\beta,\gamma}\cap U$ and consider the open sets analogous to $V_{\beta,\gamma}$ such that the various $V_{\beta,\gamma}$ be connected. Then the corresponding $A_{\beta,\gamma}\cap U$ constitute a covering of the unique complex analytic curve $C_{b,\gamma,\delta}$. \end{proof} 

\subsubsection{\hspace*{-8pt}}{Let $C$ be a complex analytic curve embedded into $X$ and an atlas $\mathcal A$ such that every chart of domain $U$ meeting $C$ satisfies: $U\cap C$ is connected}

\thm{\it The set of complex analytic curves in $X$ is the family $C_{b,\gamma,\delta}$.} 
\rm

\section{Hamilton 4-manifold of a hypermeromorphic function}\label{Sec6}

\subsection{Analytic continuation along a path} \cite[p.~116]{D90}

Let $X$ be a Hamilton 4-manifold, $\gamma :[0,1]\rightarrow X$ a continuous path from $a$ to $b$, $\varphi\in{\mathcal P}_a$ a germ of pseudoholomorphic function at $a$.

Let $\tau\in [0,1]$ and $\varphi_\tau\in{\mathcal P}_{\gamma(\tau)}$, there exists an open neighborhood $U_\tau$ of $\gamma(\tau)$ in $X$ and a pseudoholomorphic function $f_\tau\in{\mathcal P
}(
U_\tau)$ such that $\rho^{U_\tau}_{\gamma(\tau)}f_\tau=\varphi_\tau$. $\gamma$ being continuous, it exists an open neighborhood $W_\tau$ of $\tau$ in [0,1] such that $\gamma (W_\tau)\subset U_\tau$. 

\subsection{Definition} 

A germ $\psi\in{\mathcal P}_b$ is said to be {\it the analytic continuation of $\varphi$ along $\gamma$} if there exists a family $(\varphi_t)_{t\in [0,1]}$ such that:

1) $\varphi_0=\varphi$ and $\varphi_1=\psi$.

2) for every $\tau\in [0,1]$, for every $t\in W_\tau$, we have: $\rho^{U_\tau}_{\gamma(\tau)}f_\tau=\varphi_\tau$

\begin{thm}{\it Identity theorem. Let $X$ be a connected Hamilton 4-manifold and $f_1, f_2: X\rightarrow Y$ be two morphisms which coincide in the neighborhood of a point $x_0\in X$, then $f_1, f_2$ coincide on $X$.}\end{thm}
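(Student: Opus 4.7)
The plan is to run the standard open--closed connectedness argument on $X$. Define
\[
A=\{x\in X:\text{there is an open neighborhood }W\text{ of }x\text{ with }f_1|_W=f_2|_W\}.
\]
The hypothesis gives $x_0\in A$, so $A$ is nonempty, and $A$ is open by construction. Since $X$ is connected, it suffices to prove that $A$ is closed in $X$.

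To show closedness, fix $x\in\overline A$. Continuity of $f_1,f_2$ and the fact that every neighborhood of $x$ meets $A$ yield $f_1(x)=f_2(x)$. Choose pseudoholomorphic charts $(h,U)$ around $x$ in $X$ and $(k,V)$ around the common value $f_1(x)=f_2(x)$ in $Y$ with $f_i(U)\subset V$ (shrinking $U$ if necessary by continuity), and put
\[
F_i=k\circ(f_i|_U)\circ h^{-1}\colon h(U)\longrightarrow k(V)\subset\H,\qquad i=1,2.
\]
By the definition of a morphism in Section~5.3.3, $F_1$ and $F_2$ are pseudoholomorphic on the open set $h(U)\subset\H$. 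Because $x\in\overline A$, some point $y\in U\cap A$ admits a neighborhood in $U$ on which $f_1=f_2$; its image under $h$ is a nonempty open subset of $h(U)$ on which $F_1$ and $F_2$ agree.

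The crux is then a local identity principle on $\H$: a pseudoholomorphic function on a connected open set $\Omega\subset\H$ that vanishes on a nonempty open subset of $\Omega$ is identically zero on $\Omega$. Granting this, apply it to $G=F_1-F_2$ on a connected open neighborhood of $h(x)$ contained in $h(U)$ to conclude $F_1=F_2$ there. Transporting back through the charts, $f_1=f_2$ on a neighborhood of $x$, so $x\in A$. Hence $A$ is closed, and connectedness of $X$ forces $A=X$.

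The main obstacle is justifying the local identity principle just invoked. The approach I would take is to note that hyperholomorphic functions solve the Cauchy--Fueter system~\eqref{Eq1}, a first-order elliptic system with constant coefficients on $\C^2$, so they are real-analytic on $\Omega$; the classical identity theorem for real-analytic functions then forces any such function whose zero set has nonempty interior to vanish on the connected component. A more direct alternative uses Definition~(a)(i) of Section~2.3: if both components $G_1,G_2$ of $G=G_1+G_2{\bf j}$ vanish on a nonempty open subset of $\Omega$, then $Z(G_1)\cap Z(G_2)$ is not discrete, so by (a)(i) both $G_1$ and $G_2$ must be holomorphic, and the identity theorem for holomorphic functions of two complex variables then gives $G\equiv 0$.
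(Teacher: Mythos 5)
Your proof is correct and is essentially the proof the paper intends: the paper's entire justification is ``Proof as for Riemann surfaces, [D90, ch.~5]'', i.e.\ exactly your open--closed connectedness argument in charts combined with a local identity principle, and your first justification of that principle (the components of a hyperholomorphic function satisfy the constant-coefficient elliptic system \eqref{Eq1}, hence are harmonic and real-analytic, so vanishing on a nonempty open subset of a connected $\Omega\subset\H$ forces vanishing on all of $\Omega$) is the right way to secure it. I would drop the alternative argument via Definition (a)(i): that clause is a standing restriction on the class of functions the paper agrees to consider, not a proved dichotomy, and the difference $G=F_1-F_2$ of two pseudoholomorphic functions need not a priori belong to that restricted class, so the real-analyticity route is the one to keep.
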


Proof as for Riemann surfaces, \cite[ch.~5]{D90}.
\begin{thm}{\it Let X be a simply connected Hamilton 4-manifold, $a\in X$, $\varphi\in{\mathcal P}_a$ be a germ having an analytic continuation along every path from $a$. Then there exists a unique function $f\in {\mathcal P}(X)$ such that $\rho^X_af=\varphi$.}\end{thm}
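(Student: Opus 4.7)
The plan is to transplant the classical monodromy argument for Riemann surfaces to the sheaf $\mathcal{P}$ of pseudoholomorphic functions on $X$. For every $x\in X$ I would first choose a continuous path $\gamma_x\colon [0,1]\to X$ from $a$ to $x$ and let $\psi_x\in\mathcal{P}_x$ denote the analytic continuation of $\varphi$ along $\gamma_x$, which exists by the standing hypothesis. The function to be constructed is then $f(x):=\psi_x(x)$, so the heart of the proof is showing that $\psi_x$ does not depend on the chosen path.

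The decisive step is a monodromy lemma: if $\gamma_0,\gamma_1$ are two paths from $a$ to $x$ that are homotopic rel endpoints through paths along each of which $\varphi$ admits an analytic continuation, then the two terminal germs coincide. The standard approach uses the \'{e}tal\'{e} space $|\mathcal{P}|$; the projection $p\colon |\mathcal{P}|\to X$ is a local homeomorphism, and an analytic continuation of $\varphi$ along a path $\gamma$ is exactly a continuous lift $\tilde{\gamma}$ with $\tilde{\gamma}(0)=\varphi$. Subdividing a homotopy $H\colon [0,1]^2\to X$ into a sufficiently fine grid and invoking the uniqueness of lifts through a local homeomorphism once the starting germ is fixed, one extends the lift across the square and concludes $\tilde{\gamma}_0(1)=\tilde{\gamma}_1(1)$. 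Since $X$ is simply connected any two paths from $a$ to $x$ are homotopic rel endpoints, and the global hypothesis supplies analytic continuation along every intermediate path, so $\psi_x$ is intrinsically defined.

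With $\psi_x$ well-defined, pseudoholomorphy of $f$ is a purely local check. Fix $x_0\in X$, let $f_{x_0}\in\mathcal{P}(U)$ be a representative of $\psi_{x_0}$ on a connected chart neighbourhood $U$ of $x_0$, and for any $y\in U$ concatenate $\gamma_{x_0}$ with a short arc in $U$ from $x_0$ to $y$; since $f_{x_0}$ itself realises the continuation throughout $U$, the germ $\psi_y$ must coincide with the germ of $f_{x_0}$ at $y$, whence $f\equiv f_{x_0}$ on $U$. Thus $f\in\mathcal{P}(X)$ and $\rho^X_a f=\varphi$ by construction. Uniqueness is immediate from the Identity Theorem just proved: any two candidate $f$ agree near $a$ and hence, since $X$ is connected, on all of $X$.

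The principal obstacle is the monodromy lemma itself and, specifically, verifying that $|\mathcal{P}|$ enjoys the properties needed for the Riemann-surface argument to transfer. Hausdorffness of $|\mathcal{P}|$ follows from the Identity Theorem, and the projection $p$ is a local homeomorphism because pseudoholomorphy is a local sheaf condition; once these are in place, the grid-subdivision lifting argument of \cite[ch.~5]{D90} goes through unchanged.
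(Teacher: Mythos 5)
Your proposal is correct and is essentially the paper's own argument: the paper gives no independent proof, simply invoking the classical monodromy theorem for Riemann surfaces (cf.\ \cite[ch.~5, 4.1.5]{D90}), which is exactly the path-lifting/homotopy-lifting argument in the \'etal\'e space of $\mathcal P$ that you spell out, with uniqueness from the Identity Theorem stated just before. Your elaboration (well-definedness via the monodromy lemma, local pseudoholomorphy of the glued function, Hausdorffness of the \'etal\'e space via the Identity Theorem) is in fact more detailed than what the paper records.
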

(cf.~\cite[ch.~5,~4.1.5]{D90})

Let $p:Y\rightarrow X$ be a morphism of two Hamilton 4-manifolds; $p$ is locally bi-pseudoholo\-morphic, then it defines, for every $y\in Y$, an isomorphism $p_y^¡:{\mathcal P}_{x,p(y)}\rightarrow {\mathcal P}_{Y,y}$; this defines: $p_*=p_{*y}=(p^*_y)^{-1}$.

\subsection{Definition} 
Let $X$ be a Hamilton 4-manifold, $a\in X$, $\varphi\in{\mathcal P}_a$. A quadruple $(Y,p,f,b)$ is called an {\it analytic continuation of} $\varphi$ if: 

(i) $Y$ is a Hamilton 4-manifold, $p: Y\rightarrow X$ is a morphism;

(ii) $f$ is a pseudoholomorphic function on $Y$; 

(iii) $b\in p^{-1}(a)\subset Y$; $p_*(\rho^Y_bf)=\varphi$.

An analytic continuation is said to be {\it maximal} if it is solution of the following universal map problem: for every analytic continuation $(Z,q,g,c)$ of $\varphi$, there exists a fibered morphism $F: Z\rightarrow Y$ such that $F(c)=b$ and $F^*(f)=g$. Hence 

{\it If $(Y,p,f,b)$ is a maximal analytic continuation of $\varphi$, it is unique up to an isomorphism. $Y$ is called the Hamilton 4-manifold of $\varphi$}. 
\begin{thm}{\it Let $X$ be a Hamilton 4-manifold, $a\in X$, $\varphi\in{\mathcal P}_a$. Then there exists a maximal analytic continuation of $\varphi$.}\end{thm}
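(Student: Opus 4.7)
The plan is to mimic the classical construction of the Riemann surface of a germ, using the étalé space of the sheaf $\mathcal{P}$ of pseudoholomorphic functions on $X$. I would set $|\mathcal{P}| = \bigsqcup_{x\in X}\mathcal{P}_x$ with the usual sheaf topology: a basic open set is $[U,f] = \{\rho^U_x f : x\in U\}$ for $U\subset X$ open and $f\in\mathcal{P}(U)$. The projection $\pi: |\mathcal{P}|\to X$ sending each germ to its base point is then a local homeomorphism. I would define $Y$ to be the path-component (equivalently, connected component) of the point $\varphi\in|\mathcal{P}|_a$ in $|\mathcal{P}|$, put $b = \varphi$, and let $p = \pi|_Y$.

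Next I would equip $Y$ with a Hamilton 4-manifold structure by transport of structure. Since $p$ is a local homeomorphism, around every point $\psi\in Y$ there is an open $\widetilde V\subset Y$ such that $p: \widetilde V\to p(\widetilde V)$ is a homeomorphism onto an open subset of some chart domain $(h,U)\in\mathcal A$; I take $(h\circ p, \widetilde V)$ as a chart on $Y$. The transition functions between two such charts on $Y$ are, on the overlap, equal to the original transition functions $h_{i,j}$ of $\mathcal A$ on $X$ (read through $p$), hence sha, so this defines an atlas making $Y$ a Hamilton 4-manifold for which $p$ is automatically a pseudoholomorphic morphism. I would then define $f: Y\to\mathbb H$ by $f(\psi) = \psi(p(\psi))$, i.e.\ evaluation of the germ at its base point; in any basic neighborhood $[U,\widetilde f\,]$ of $\psi$ this is literally $\widetilde f\circ p$, so $f\in\mathcal P(Y)$. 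By construction $p(b)=a$ and the germ of $f$ at $b$ pushes forward under $p_*$ to $\varphi$, so $(Y,p,f,b)$ is an analytic continuation.

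For maximality, let $(Z,q,g,c)$ be any analytic continuation of $\varphi$. I would define $F:Z\to|\mathcal P|$ by $F(z) = q_*(\rho^Z_z g)\in \mathcal P_{q(z)}$. This is continuous because, locally on $Z$, $q$ is an isomorphism onto an open subset of $X$ and $g\circ q^{-1}$ is a pseudoholomorphic representative of $F(z)$, so $F$ lands in a single sheet $[q(\widetilde V),g\circ q^{-1}]$. At $z=c$ we get $F(c)=p_*(\rho^Z_c g)=\varphi=b$, so $F(Z)$ meets the component $Y$; by continuity and connectedness of $Z$, $F$ factors through $Y$. The identities $p\circ F = q$ and $F^*f = g$ hold on a neighborhood of $c$ (evaluating germs at base points), hence everywhere on $Z$ by the Identity Theorem already stated above. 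Uniqueness of $F$ follows from the same Identity Theorem applied in $Y$.

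The main obstacle is the verification that $Y$ actually acquires a Hamilton 4-manifold structure, i.e.\ that the pulled-back atlas has sha transitions. This is automatic once one checks carefully that on an overlap $\widetilde V_i\cap\widetilde V_j$ the map $p$ lands in $U_i\cap U_j$ and the transitions read $h_i\circ h_j^{-1}$ literally, but it requires arranging the basic open sets $\widetilde V$ to be small enough that $p(\widetilde V)$ is contained in a single chart of $\mathcal A$; this is where the fact that transitions between charts of $\mathcal A$ are sha (and hence stay sha under restriction and composition) is essential, together with the fact that a composition of two pseudoholomorphic morphisms is pseudoholomorphic. Everything else — continuity of $F$, the factorization through the component $Y$, and uniqueness — is a direct transcription of the Riemann surface argument in \cite[ch.~5]{D90}, using the Identity Theorem in place of the classical one.
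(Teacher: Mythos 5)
Your construction is correct and is essentially the proof the paper intends: the theorem is stated there without a written proof, the author implicitly referring (as for the neighbouring statements) to the classical \'etal\'e-space construction for Riemann surfaces in \cite[ch.~5]{D90}, which is exactly what you transcribe — component of $\varphi$ in the \'etal\'e space of $\mathcal P$, transported charts with sha transitions, evaluation function $f$, and factorization of any other continuation through $Y$ via the Identity Theorem. The one point worth adding explicitly is that $Y$ must be verified to be Hausdorff before calling it a Hamilton 4-manifold; this is precisely where the Identity Theorem you already invoke is needed (for sheaves of general $C^\infty$ functions the \'etal\'e space fails to be Hausdorff), and with that remark your argument matches the intended proof.
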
 

\subsection{Remark}
Then, we will say that the above function $f$ is {\it the unique maximal analytic continuation of the germ $\varphi$}. Moreover, the above definitions and results of the section~\ref{Sec2} are valid for the sheaf ${\mathcal M}$ of hypermeromorphic functions instead of the sheaf~${\mathcal P}$.

\subsection{Main result}
\begin{thm}{\it \label{Thm2.4}Let $X$ be a Hamilton 4-manifold and $P(T)= T^n+c_1T^{n-1}+\ldots+c_n\in {\mathcal M}(X)[T]$ be an irreducible polynomial of degree $n$. Then there exist a Hamilton 4-manifold $Y$, a ramified pseudoholomorphic covering (cf.~\cite[ch.~5]{D90} for Riemann surfaces) with $n$ leaves $\Pi: Y\rightarrow X$ and a hypermeromorphic function $F\in {\mathcal M}(Y)$ such that $(\Pi^*P)(F)=0$.}\end{thm}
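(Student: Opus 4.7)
Parallel the classical construction of the Riemann surface attached to an irreducible algebraic equation (as in \cite[ch.~5]{D90}, to which the author defers for the preceding theorems), now carried out in the category of Hamilton 4-manifolds with the sheaves $\mathcal{P}$ and $\mathcal{M}$ of section~\ref{Sec5} and the maximal analytic continuation theorem just stated. Introduce the \emph{discriminant locus} $D \subset X$: the union of the polar sets of the coefficients $c_1,\dots,c_n$ with the zero set of the discriminant $\Delta(P)$, each viewed as an element of $\mathcal{M}(X)$. The complement $X^* = X \setminus D$ is where $P$ has $n$ distinct roots at every fibre, and is where the branched covering is first built by hand; the work then consists in extending across $D$.

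\textbf{Construction over $X^*$ and extension.} At $x_0 \in X^*$ and for each simple root $w_0$ of $P(x_0, T) = 0$, produce a pseudoholomorphic germ $\varphi_{x_0, w_0} \in \mathcal{P}_{x_0}$ with $P(\varphi_{x_0, w_0}) = 0$ via a hypermeromorphic implicit function argument; feed it into the maximal analytic continuation theorem to get a quadruple $(Y_{x_0, w_0}, p, f, b)$. Glue these quadruples, identifying germs whose continuations take the same value, to obtain an unramified $n$-sheeted pseudoholomorphic covering $\Pi^* : Y^* \to X^*$ together with $F^* \in \mathcal{P}(Y^*)$ satisfying $(\Pi^{*} P)(F^*) = 0$. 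Irreducibility of $P$ over $\mathcal{M}(X)$ forces $Y^*$ to be connected, since otherwise the elementary symmetric functions of the branches on each connected component would split $P$ in $\mathcal{M}(X)[T]$. Over each point $a \in D$, and for each monodromy cycle of length $k$ of $\Pi^*$ around $a$, adjoin a single point $\tilde a$ and equip its neighbourhood with a chart whose transition to the ambient atlas is sha in the sense of section~\ref{Sec5}. The resulting $(Y, \Pi)$ is the ramified pseudoholomorphic covering sought; $F^*$ extends by the identity theorem to $F \in \mathcal{M}(Y)$, and $(\Pi^{*} P)(F) = 0$ persists on $Y$ by continuity.

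\textbf{Main obstacle.} The Riemann-surface script rests on two local tools that are not established in the paper: a hypermeromorphic implicit function theorem, needed to promote a simple root $w_0$ of $P(x_0, T)$ to a pseudoholomorphic germ at $x_0$; and a Puiseux-type local normal form of the shape $w^k = z$, needed at ramification points both to count the monodromy cycles correctly and to check that the compactifying chart is sha. As the author himself emphasises, the nonlinear PDE systems defining (w-)hypermeromorphy have not been solved in general, and each of these local tools is precisely an existence statement for those systems subject to a polynomial side condition. Proving these two local results—an implicit function theorem for the operator $\mathcal{D}$ with algebraic constraints, and a branch normal form at points of $D$—is where the bulk of the work lies and is the principal obstacle to turning the outline above into a complete proof.
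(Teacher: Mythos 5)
Your plan is essentially the paper's own argument: remove the discrete set where the discriminant of $P$ vanishes or the $c_j$ fail to be pseudoholomorphic, build the unramified $n$-sheeted covering out of the pseudoholomorphic root germs (the paper realizes it inside the \'etal\'e space $L\mathcal H$ of the sheaf, which amounts to your gluing of maximal analytic continuations), get connectedness from irreducibility, fill in the ramification points, and extend the root function --- and the two local tools you flag as unestablished (an implicit function theorem for $\mathcal D$ giving the local factorization $P(T)=\prod_j(T-f_j)$, and a normal form at branch points) are exactly the steps the paper dispatches with ``It can be shown''. The one substantive divergence is the final extension of $F^*$ across the ramification locus: the paper obtains it from its elementary-symmetric-function criterion (the theorem cited in the proof as Theorem 6.5, since the $c_j$ are defined on all of $X$), which is the appropriate removable-singularity tool there, rather than your appeal to the identity theorem and continuity, which by itself does not yield an extension across the branch set.
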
 
$F$ is the unique maximal analytic continuation of every hypermeromorphic germ $\varphi$ of $X$ such that $P(\varphi)=0$; $F$ is called the {\it hyperalgebraic function defined by the polynomial $P$} and $Y$ is the {\it Hamilton 4-manifold of} $F$. 

{\it Proof} at the end of the section.
\\
1) $X$ {\it is compact connected}.
\\
2) {\it Every pseudoholomorphic function on $X$ is constant.}
\\
3) {\it Every hypermeromorphic function $f$ on $X$ different from $\infty$ is rational}. 
\\
4) {\it In case $X=\P$, in Theorem~\ref{Thm2.4}, $c_j$ is rational.} Indeed, since $c_j$ is hypermeromorphic, from 3), it is rational.

\subsection{Proof of Theorem~\ref{Thm2.4}} In the notations of Ex. 3, $\zeta$ is a local coordinate on $X=\P$. 

$f$ has a finite set of poles $p_1,\ldots, p_n$. Assume that $\infty$ is not a pole of $f$, then $p_1,\ldots, p_n\in\H$. Let $h_\nu$ the principal part of $f$ at $p_\nu$, then $f-h_\nu=a_\nu$, constant, from 2) and $h_\nu=\displaystyle\sum_{j=-k_\nu}^{-1}C_{\nu j}(\zeta-p_\nu^j)$ is a hypermeromorphic function, where $C_{\nu j}\in\H$.

\subsubsection{Elementary symmetric functions} 

Let $$\Pi:Y\rightarrow X$$ be a nonramified pseudoholomorphic covering with $n$ leaves, and $f$ be a hypermeromorphic function on $Y$. Every point $x\in X$ has an open neighborhood $U$ such that $\Pi^{-1}(U)=\displaystyle\bigcup_{j=1}^n V_j$ where the $V_j$ are disjoint and $\Pi\vert V_j: V_j\rightarrow U$ is bi-pseudoholomorphic, $(j=1,\ldots,n)$; let $\varphi_j:U\rightarrow V_j$ the reverse (i.e. set inverse) of $\Pi\vert V_j$ and $f_j=\varphi_j^*f=f.\varphi_j$. Then:

$$\Pi_{j=1}^n (T-f_j)=T^n+c_1T^{n-1}+\ldots+ c_n;$$ 
$c_j=(-1)^js_j(f_1,\ldots,f_ n)$, where $s_j$ is the $j$-th elementary symmetric function in $n$ variables. The $c_j$ are hypermeromorphic, locally defined, but glue together into $c_1,\ldots,c_n\in {\mathcal M} (X)$ and are called {\it the elementary symmetric functions of $f$ with respect to $\Pi$}.

\subsubsection{Remark}
The elementary symmetric functions of a hypermeromorphic function on $Y$ are still defined when the covering $\Pi$ is ramified.

\subsubsection{\hspace*{-8pt}}
\begin{thm}{\it Let $\Pi$ as in Theorem~\ref{Thm2.4}, with $Y$ not necessarily connected, $A\subset X$ be a discreet closed subset containing all the critical values of $\Pi$, and $B=\Pi^{-1}(A)$.

Let $f$ be a pseudoholomorphic (resp. hypermeromorphic) function on $Y\setminus B$ and
$$c_1,\ldots,c_n\in {\mathcal H} (X\setminus A) (resp.{\mathcal M} (X\setminus A))$$
the elementary symmetric functions of $f$. Then the following two conditions are equivalent:

$(i)$ $f$ has a pseudoholomorphic (resp. hypermeromorphic) extension to $Y$;

$(ii)$ for every $j=1,\ldots,n$, $c_j$ has a pseudoholomorphic (resp. hypermeromorphic) extension to $X$.
}\end{thm}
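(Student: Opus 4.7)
The statement is local on $X$: extendability to $Y$ (resp.\ to $X$) is a pointwise property at each $b\in B$ (resp.\ each $a\in A$). Since $A$ is discrete and $\Pi^{-1}(a)$ is finite, the plan is to fix $a\in A$, work in a small disk-like chart around $a$, and analyze both implications on $\Pi^{-1}(U)$ for $U$ a small coordinate neighbourhood of $a$ in $X$. Both directions reduce to extension results for pseudoholomorphic (or hypermeromorphic) functions across an isolated point of a Hamilton 4-manifold; since such a point has real codimension $4$, Hartogs-type/Riemann-type removable singularity statements are expected to be available for the sheaves $\mathcal P$ and $\mathcal M$ introduced in Section~\ref{Sec5}, in analogy with the Riemann surface proof cited in \cite{D90}.

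\textbf{Direction $(i)\Rightarrow (ii)$.} Assuming $f$ extends to a pseudoholomorphic (resp.\ hypermeromorphic) function on all of $Y$, I would show each $c_j$ extends across $a\in A$. Away from $A$, $c_j=(-1)^j s_j(f_1,\dots,f_n)$ is already globally well defined in $\mathcal P(X\setminus A)$ (resp.\ $\mathcal M(X\setminus A)$) by the construction recalled in the paper. Near $a$, choose $U$ small enough that $\Pi^{-1}(U)=\bigsqcup_k V_k$ with $\Pi|V_k$ a ramified pseudoholomorphic cover of $U$ of some order $e_k$ (with $\sum_k e_k=n$), each $V_k$ containing a single point of $B$. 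On each $V_k$ the extended $f$ is locally bounded (resp.\ meromorphic with a well-defined pole order), so the local values of $f$ over a punctured neighbourhood of $a$ form a bounded (resp.\ meromorphic) multivalued branch, and the elementary symmetric functions in these branches are single-valued and locally bounded (resp.\ have controlled poles) on $U\setminus\{a\}$. The removable singularity principle then promotes them to elements of $\mathcal P(U)$ (resp.\ $\mathcal M(U)$), and patching these local extensions yields the global extensions $c_j\in\mathcal P(X)$ (resp.\ $\mathcal M(X)$).

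\textbf{Direction $(ii)\Rightarrow (i)$.} Here one starts from $f\in\mathcal P(Y\setminus B)$ (resp.\ $\mathcal M(Y\setminus B)$) and extended $c_j\in\mathcal P(X)$ (resp.\ $\mathcal M(X)$). Fix $b\in B$ with $\Pi(b)=a$ and choose a coordinate chart around $b$. On the pointed neighbourhood $V_k\setminus\{b\}$ the function $f$ satisfies the monic equation
\[
f^{\,n}+(c_1\circ\Pi)\,f^{\,n-1}+\cdots+(c_n\circ\Pi)=0,
\]
whose coefficients are now pseudoholomorphic (resp.\ hypermeromorphic) in a full neighbourhood of $b$. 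A standard argument for monic polynomials bounds $|f|$ by a polynomial in $\max_j|c_j\circ\Pi|$, so $f$ is locally bounded near $b$ in the pseudoholomorphic case (resp.\ its $|f|^{-1}$-growth is controlled, giving a hypermeromorphic pole of order bounded by the $c_j$ in the meromorphic case). Once $f$ is bounded (resp.\ has controlled growth) near the isolated point $b$, the removable singularity theorem for the sheaf $\mathcal P$ (resp.\ $\mathcal M$) at an isolated point of a Hamilton 4-manifold produces the desired extension.

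\textbf{Main obstacle.} The non-trivial analytic ingredient I am using in both directions is a \emph{Riemann-type removable singularity theorem} for pseudoholomorphic and hypermeromorphic functions across an isolated point of a Hamilton 4-manifold. In the classical Riemann surface case this is a one-line application of Riemann's theorem, but in the hyperholomorphic setting it must be established either by invoking the real codimension $4$ of the singular locus together with Hartogs-type extension for solutions of the elliptic first-order system~\eqref{Eq1}, or by reducing, via the definition of hypermeromorphy, to analogous removability for the complex components $f_1,f_2$. Ensuring that the control given by the monic polynomial actually translates into the boundedness hypothesis required by that removable singularity statement -- in particular in the hypermeromorphic case, where a pole of $c_j$ must be shown to produce only a hypermeromorphic (not essential) singularity of $f$ -- is the step where genuine work beyond the Riemann surface template is needed.
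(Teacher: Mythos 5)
The first thing to say is that the paper itself contains no proof of this statement: the theorem is stated in the middle of the proof of Theorem~\ref{Thm2.4} and then invoked in the construction of $Y$, with the implicit understanding (as for the identity theorem and the analytic-continuation results earlier in the section) that the argument is ``as for Riemann surfaces'' in \cite[ch.~5]{D90}. So there is no argument in the paper to compare yours with; your outline reproduces the classical Riemann-surface template (local analysis over a small neighbourhood of $a\in A$, symmetric functions of the branches for $(i)\Rightarrow(ii)$, the monic equation $f^n+(c_1\circ\Pi)f^{n-1}+\cdots+(c_n\circ\Pi)=0$ plus removability for $(ii)\Rightarrow(i)$), which is evidently what the author intends.

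That said, your proposal has a genuine gap, and you name it yourself: both directions rest on a Riemann/Hartogs-type removable singularity theorem for the sheaves $\mathcal P$ and $\mathcal M$ at points of a Hamilton 4-manifold, which is proved neither in the paper nor in your sketch. In the pseudoholomorphic case one may hope to obtain it from the linear elliptic system \eqref{Eq1} (a locally bounded solution on a punctured ball extends, the puncture having real codimension $4$), but membership in $\mathcal P$ or $\mathcal M$ is not a linear condition: hypermeromorphic functions must in addition have hyperholomorphic inverses almost everywhere and satisfy the nonlinear systems of Sections~\ref{Sec3} and~\ref{Sec4}, and these constraints have to be verified for the extension, not just boundedness of $f$. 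Moreover, in the hypermeromorphic case the singular locus of $f$ is a locally finite union of real hypersurfaces $Z(f_1)$, $Z(f_2)$, not a discrete set, so near $b\in B$ you are not facing an isolated singularity at all: the undefined locus may accumulate at $b$, and the pole-order bookkeeping of Section~\ref{Sec3} (via jets of $\vert f\vert\overline f_i^{-1}$) does not obviously interact with the bound $\vert f\vert\le 1+\max_j\vert c_j\circ\Pi\vert$ you extract from the monic equation (that bound itself is fine, since the quaternionic norm is multiplicative). Note also that in the noncommutative setting the elementary symmetric functions $s_j(f_1,\ldots,f_n)$ depend on the ordering of the branches, and the fact that they are well defined, glue, and remain hypermeromorphic is itself part of the paper's unproved setup rather than something your argument can take for free. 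So your outline has the right shape, but the two steps needed to make it a proof --- removability for $\mathcal P$/$\mathcal M$ and the treatment of non-isolated singular loci in the hypermeromorphic case --- are exactly where the quaternionic situation departs from \cite{D90}, and they remain open in your proposal.
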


\subsubsection{Existence of $Y$ in \texorpdfstring{Theorem~\ref{Thm2.4}}{Theorem 6.4}}

Let $\Delta\in {\mathcal M}(X)$ be the discriminant of $P(T)$; $P(T)$ being irreducible, $\Delta\not =0$: then there exists a discrete closed set $A\subset X$ such that, for every $x\in X'=X\setminus A$, $\Delta (x)\not=0$, and all the functions $c_j$ are pseudoholomorphic. 

Let $Y'=\{\varphi\in{\mathcal H}_x, x\in X'; P(\varphi)=0\}\subset L\mathcal H$, etal space defined by the sheaf $\mathcal H$, and $\Pi': Y'\rightarrow X$, $(\varphi\mapsto x)$. 

It can be shown that, for every $x\in X'$, there exists an open neighborhood $U$ of $x$ in $X'$ and functions $f_j\in {\mathcal H}(U)$, $j=1,\ldots, n$, such that $P(T)\vert U=\displaystyle\Pi_{j=1}^n(T-f_j)$; then $\Pi'^{-1}(U)=\displaystyle\bigcup_{j=1}^n[U,f_j]$ where $[U,f_j]=\{f_{jy},y\in U\}$ is an open set of $L{\mathcal H}$ and $\Pi'\vert [U,f_j]:[U,f_j]\rightarrow U$ is a homeomorphism; $Y'$ is a Hamilton 2-manifold non necessarily connected, and a pseudoholomorphic, non ramified covering of $X'$. It can be shown that $\Pi'$ can be extended into a ramified pseudoholomorphic covering $\Pi :Y\rightarrow X$ of $X$ for which $Y'=\Pi^{-1}(X').$ 

The $c_j$ are defined on the whole of $X$; from Theorem 6.5, $f$ has an extension $F\in {\mathcal M}(X)$ such that 

$$\Pi^*P(F)=F^n+(\Pi^*c_1)F^{n-1}+\ldots+\Pi^*c_n=0.$$ 

It is easy to prove the connectedness of $Y$ and the unicity of $F$.

This ends the proof of Theorem~\ref{Thm2.4}.

\section{The Hamilton 4-manifold \texorpdfstring{$Y$ of $F$ when $X=\P$}{Y of F when X=\P}}\label{Sec7}

\subsection{Recall the main properties of $Y$} \hspace*{1pt}

$Y$ is of real dimension 4; 

$Y$ is connected;

$Y$ is compact; 

$Y$ is $C^\infty$; 

let $m$ be the number of the critical values of $\Pi$ and $q_j$ these critical values; they define points of $Y$ forming the $0$-skeleton of a simplicial complex $K$ carried by the manifold $Y$. $K$ may be supposed to be $C^\infty$ by parts. Cutting along the $3$-faces of $K$ defines a fundamental domain $FD$ of the covering $\Pi$. $FD$ is a 4-dim polytope in $\P$ with an even number of 3-faces; gluying together the opposite 3-faces, we get a compact 4-dim polytope with homology of the Hamilton 4-manifold $Y$. 

\subsection{Homology of $Y$} $H^p(Y;\Z)$, for $p=0,\ldots,4$ have to be evaluated, using the critical values $q_j$, and the Poincar\'e duality.


\end{document}